%
\documentclass[runningheads]{llncs}
%
%

\usepackage{amssymb}
\setcounter{tocdepth}{3}
\usepackage{graphicx}
\usepackage[extra]{tipa}
\usepackage{amsmath}
\usepackage{url}
\usepackage{color}
\urldef{\mailsa}\path| idimit@math.upatras.gr|
\newcommand{\keywords}[1]{\par\addvspace\baselineskip
\noindent\keywordname\enspace\ignorespaces#1}

\begin{document}
\title{Stationary analysis of a tandem queue with coupled processors subject to global breakdowns}
\titlerunning{A tandem queue with coupled processors subject to global breakdowns}
%
\author{Ioannis Dimitriou\orcidID{0000-0002-3755-9698}}
\authorrunning{I. Dimitriou}
%
\institute{Department of Mathematics, University of Patras, 26500 Patras, Greece, \mailsa\\
\url{https://thalis.math.upatras.gr/~idimit/}}
\maketitle              
\begin{abstract}
We consider a tandem queue with coupled processors, which is subject to global breakdowns. When the network is in the operating mode and both queues are non empty, the total service capacity is shared among the stations according to fixed proportions. When one of the stations becomes empty, the total service capacity is given to the non-empty station. Moreover, arrival rates depend on the state of the network. The system is described by a Markov modulated random walk in the quarter plane representing the number of jobs in the two stations and the state of the network. By applying the generating function approach, we first apply the power series approximation method to obtain power series expansions of the generating function of the stationary queue lengths for both network states. Then, we also provide a way to derive the generating function of the stationary queue lengths for both network states in terms of the solution of a Riemann-Hilbert boundary value problem. Numerical results are obtained to show insights in the system performance.

\keywords{Tandem queues, Coupled processors, Power-Series Approximation, Boundary value problems}
\end{abstract}
\section{Introduction}
Queueing networks with service interruptions are known to be adequate models to handle realistic problems in manufacturing, telecommunications etc. Despite their great importance, there has been done very few works involving more
than one queue, since even under favorable assumptions, the
existence of service interruptions destroys the separability \cite{bcmp} of the appropriate multidimensional Markov process and renders its solution intractable. 

The problem becomes even more challenging when we further assume that the nodes are interacting with each other. With the term ``interaction", we mean that the service rate at a node depends on the state of the other nodes, i.e., a networks with coupled processors \cite{fay1,res,van}. For such systems, which do not possess the ``product form" solution, analytic methods have been developed in \cite{fay,bv}. 

In this work, we go one step further and consider a two-node tandem network with coupled processors, which is subject to global (i.e., network) breakdowns. It is assumed that jobs arrive at the first
station according to a Poisson process according to the state of the network, and require service at both stations before
leaving the system. The amounts of work that a job requires at each of the stations
are independent, exponentially distributed random variables. When the network is in the operating mode, and both stations
are nonempty, the total service capacity is shared between the stations according
to fixed proportions. 

When one of the stations becomes empty, the total service
capacity is given to the non-empty station. When the network is in the setup mode after a failure occurrence, both stations stop working\footnote{This is natural when we are dealing with queues in series.}, for an exponentially distributed time period. During setup period, jobs continue to arrive, but now at a decreased rate in order to avoid further congestion\footnote{Such an operation can be performed by a central scheduler, responsible for the congestion management.}. Such a system is fully described by a random walk in the quarter plane (representing the number of jobs in each node), which is modulated by a two state Markov chain (representing the state of the network). 

For such a network we provide to different approaches, named the Power Series Approximation (PSA) method, and the theory of Riemann-Hilbert boundary value problems (BVP) to investigate its stationary behavior.
\subsection{Related work}
Most of the existing studies involving breakdowns have concentrated on models with
a single job queue served by one or more processors, e.g. \cite{tak,mitrr}. Other related results where jobs from a single source are directed to one of
several parallel queues, and where breakdowns result in the loss of jobs, or the direct transfer to other queues are given in \cite{thom,idri,mitr}. A network of two nodes subject to breakdowns and repairs was also analysed in \cite{mik}. Approximate solutions to obtain performance measures, based on replacing an interruptable server with an uninterruptable but slower one, choosing the new service rate without affecting the overall service
capacity was given in \cite{rei,vin}.

Queues with coupled processors were initially studied in \cite{fay1}. To gain quantitative insights about the queueing process, they derived a solution for the generating function of the stationary joint queue-length distribution using the theory of Riemann-Hilbert boundary value problems. Later, in \cite{bv} a systematic and detailed study of the technique of reducing a two dimensional functional equation of a random walk or queueing model to a boundary
value problem was presented, while several numerical issues were discussed. Important generalizations were given in \cite{coh,avr,gui1,van,res,dim1,dim2,dimpap,dimpap1,dimpaptwc,dimpapad} (not exhaustive list) where various two-dimensional queueing models with the aid of the theory of Riemann (-Hilbert) boundary value problems. 

A tandem queue with two coupled processors was analyzed in \cite{res}, while later computational issues, as well as asymptotic results were discussed and presented in \cite{van,gui2}, respectively. There, it was shown that the problem of finding the bivariate generating function of the stationary joint queue length distribution is reduced to a Riemann-Hilbert boundary value problem.

Applications of coupled processor models arise systems where limited resources are dynamically shared among processors, e.g., in data transfer in bidirectional cable and
data networks \cite{van}, in bandwidth sharing of data flows \cite{gui2}, in the performance modeling of device-to-device communication \cite{vita}, to model the complex interdependence among transmitters due to interference \cite{borst0,dimpaptwc,dimpap,dimpapad}, as well as in assembly lines in manufacturing \cite{antr}.

Other approaches to analyze two-dimensional queueing models have been developed in \cite{ad} (compensation method), and \cite{blanc2,pss} (power series algorithm). In the latter one, power series expansions of steady-state probabilities as functions of a certain parameter of the system (usually the load) were derived. Recently, the authors in \cite{walr} studied generalized processor sharing queues and introduced an alternative method, by providing power series expansions of the generating function; see also \cite{vanle,vanle1,dim}.
\subsection{Our contribution}
In this work we focus on the stationary analysis of two-node tandem queue with coupled processors and network breakdowns.  
\paragraph{Applications} Potential application of our system are found in systems with limited capacity, which must be shared in multiple operations. For example, in the modeling of virus attacks or other malfunctions in cable access networks regulated by a request-grant mechanism. 

Another application of the model can be found in manufacturing \cite{antr}, and in particular in an assembly line. There, two operations on each job must be performed using a limited service capacity. To increase the network throughput, we couple the service rates at each of the operations, and thus we use the service capacity of an operation for which no jobs are waiting for the other operation. Such a system is heavily affected by the presence of failures during the job processing, which in turn will definitely deteriorate the system throughput.
\paragraph{Fundamental contribution} Based on the generating function approach, we apply two different methods to investigate the stationary behaviour of the underline Markov modulated random walk in the quarter plane. First, we apply the power series approximation method, initially introduced in \cite{walr} (see also \cite{walra,vanle1,vanle}), for two-parallel generalized processor sharing queues, described by a typical random walk in the quarter plane (RWQP). In this work we show that this method is still valid for related \textit{Markov modulated} RWQP, and thus, extend the class of models that can be applied. Under such a method we obtain power series expansions of the probability generating function (pgf) of the joint stationary distribution for either state of the network. A recursive technique to derive their coefficients is also presented.

Secondly, we also derive the pgfs of the joint stationary queue-length distribution for either state of the network with the aid of the theory of Riemann-Hilbert boundary value problems \cite{fay1,fay,bv}. It is seen that applying the theory of boundary value problems some further technical difficulties are also arise. More precisely, by applying the generating function approach we first come up with a system of functional equations, which is then reduced to a single fundamental equation.

The rest of the paper is summarized as follows. In Section \ref{sec:mod} we present the mathematical model in detail and obtain the functional equations along with some preliminary results. Section \ref{sec:sp} is devoted to the analysis of the two extreme cases where the total capacity is allocated to one of the two stations, even if both stations are nonempty. In Section \ref{sec:psa} we apply the power series approximation method for the case where the service capacity is shared by the two stations, where in Section \ref{sec:bvp}, we provide a complete analysis on how to obtain the pgfs of the stationary joint queue length distribution in terms of a solution of a Riemann-Hilbert boundary value problem. Numerical validations of the performance metrics obtained by using the PSA and the BVP approaches are given in \ref{sec:num}.
\section{The model and the functional equations}\label{sec:mod}
Consider a two-stage tandem queue, where jobs arrive at queue 1 according to a Poisson
process with rate depending on the state of the network. In particular, the network is subject to breakdowns which occur according to a Poisson process with rate $\gamma$. When a breakdown occurs, both stations stop working for an exponentially distributed time period with rate $\tau$. Thus, the network alternates between the \textit{operating mode} and the \textit{setup mode}. Denote by $C(t)$ the state of the network at time $t$, with $C(t)=0$ (resp. 1), when network is in operating (resp. setup). When $C(t)=i$, jobs arrive in station 1 according to a Poisson process with rate $\lambda_{i}$, $i=0,1$. 

Each job demands service at both queues before departing from the
network. More precisely, at station $j$, a job requires an exponentially distributed amount of service with parameter $\nu_{j}$, $j = 1, 2$. The total service capacity of the tandem network equals one unit of
work per time unit. In particular, when both stations are non-empty, station $j$ is served at a rate $\phi_{j}$, $j=1,2$, and at a rate 1 when it is the only non empty. Without loss of generality we assume hereon that $\phi_{1}=p$ and $\phi_{2}=1-p$, where $0\leq p\leq1$.

Let $Q_{j}(t)$, $j=1,2,$ be the number of customers at queue $j$ at time $t$. Under usual assumptions the stochastic process $X(t)=\{(C(t),Q_{1}(t),Q_{2}(t));t\geq0\}$ is an irreducible and aperiodic continuous time Markov chain with state space $E=\{0,1\}\times\mathbb{Z}^{+}\times\mathbb{Z}^{+}$. Denote by $\pi_{i}(n,k)$ the stationary probability of having $n$ and $k$ customers at stations 1 and 2, respectively, when the network is in state $i$. The balance equations are given by
\begin{equation}
\begin{array}{rl}
(\lambda_{0}+\gamma)\pi_{0}(0,0)=&\nu_{2}\pi_{0}(0,1)+\tau\pi_{1}(0,0),\\
(\lambda_{0}+\gamma+\nu_{2})\pi_{0}(0,1)=&\nu_{1}\pi_{0}(1,0)+\nu_{2}\pi_{0}(0,2)+\tau\pi_{1}(0,1),\\
(\lambda_{0}+\gamma+\nu_{2})\pi_{0}(0,k)=&p\nu_{1}\pi_{0}(1,k-1)+\nu_{2}\pi_{0}(0,k+1)+\tau\pi_{1}(0,k),\,k\geq2\\
\end{array}
\end{equation}
\begin{equation}
\begin{array}{l}
(\lambda_{0}+\gamma+\nu_{1})\pi_{0}(n,0)=\lambda_{0}\pi_{0}(n-1,0)+(1-p)\nu_{2}\pi_{0}(n,1)+\tau\pi_{1}(n,0),\vspace{2mm}\\
(\lambda_{0}+\gamma+p\nu_{1}+(1-p)\nu_{2})\pi_{0}(n,1)=\lambda_{0}\pi_{0}(n-1,1)+\nu_{1}\pi_{0}(n+1,0)\\+(1-p)\nu_{2}\pi_{0}(n,2)+\tau\pi_{1}(n,1),\,n\geq1,\vspace{2mm}\\
(\lambda_{0}+\gamma+p\nu_{1}+(1-p)\nu_{2})\pi_{0}(n,k)=\lambda_{0}\pi_{0}(n-1,k)+p\nu_{1}\pi_{0}(n+1,k-1)\\+(1-p)\nu_{2}\pi_{0}(n,k+1)+\tau\pi_{1}(n,k),\,n\geq1,k\geq2\end{array}
\end{equation}
\begin{equation}
\begin{array}{c}
(\lambda_{1}+\tau)\pi_{1}(n,k)=\gamma\pi_{0}(n,k)+\lambda_{1}\pi_{1}(n-1,k).
\end{array}
\end{equation}
Define the probability generating functions of the joint stationary queue length distribution 
\begin{displaymath}
\Pi_{i}(x,y)=\sum_{n=0}^{\infty}\sum_{k=0}^{\infty}\pi_{i}(n,k)x^{n}y^{k},\,i=0,1,\,|x|\leq1,|y|\leq1.
\end{displaymath}

Using the balance equations we obtain after some algebra the following system of functional equations
\begin{equation}
\begin{array}{rl}
R(x,y)\Pi_{0}(x,y)=&A(x,y)\Pi_{0}(x,0)+B(x,y)\Pi_{0}(0,y)\\&+C(x,y)\Pi_{0}(0,0)+\tau xy\Pi_{1}(x,y),\vspace{2mm}\\
\
\Pi_{1}(x,y)=&\frac{\gamma}{D(x)}\Pi_{0}(x,y),
\end{array}
\label{f1}
\end{equation}
where, $D(x)=\lambda_{1}(1-x)+\tau$ and
\begin{equation}
\begin{array}{rl}
R(x,y)=&xy(\lambda_{0}(1-x)+\gamma)+\nu_{1}py(x-y)+\nu_{2}(1-p)x(y-1),\\
A(x,y)=&(1-p)[\nu_{2}x(y-1)+\nu_{1}y(y-x)],\\
B(x,y)=&-\frac{p}{1-p}A(x,y),\\
C(x,y)=&\nu_{1}(1-p)y(x-y)+\nu_{2}px(y-1).
\end{array}\label{ui}
\end{equation}

Our aim is to solve the system of functional equations (\ref{f1}). Substituting the second in (\ref{f1}) to the first one, we obtain the following fundamental functional equation
\begin{equation}
\begin{array}{c}
\Pi_{0}(x,y)[D(x)R(x,y)-\tau\gamma xy]=D(x)\{A(x,y)\Pi_{0}(x,0)+B(x,y)\Pi_{0}(0,y)\\+C(x,y)\Pi_{0}(0,0)\}.
\end{array}
\label{f2}
\end{equation}
Clearly, $\Pi_{0}(1,1)+\Pi_{1}(1,1)=1$, while by using the second in (\ref{f1}) we obtain $\Pi_{1}(1,1)=\frac{\tau}{\gamma}\Pi_{0}(1,1)$. Thus, the probabilities of the network state are easily given by
\begin{displaymath}
\Pi_{0}(1,1)=\frac{\tau}{\tau+\gamma},\,\Pi_{1}(1,1)=\frac{\gamma}{\tau+\gamma}.
\end{displaymath}

Let $x=s(y):=\frac{\nu_{1}y^{2}}{\nu_{1}+\nu_{2}(1-y)}$. Note that $A(s(y),y)=B(s(y),y)=0$. Then, using (\ref{f2}) we obtain
\begin{equation}
\Pi_{0}(s(y),y)=\frac{D(s(y))C(s(y),y)}{D(s(y))R(s(y),y)-\tau\gamma s(y)y}\Pi_{0}(0,0).
\label{fg}
\end{equation}
Letting $y\to 1$ in (\ref{fg}) we obtain
\begin{equation}
\Pi_{0}(0,0)=\frac{\tau}{\tau+\gamma}-\left(\lambda_{0}\frac{\tau}{\tau+\gamma}+\lambda_{1}\frac{\gamma}{\tau+\gamma}\right)\left(\frac{1}{\nu_{1}}+\frac{1}{\nu_{2}}\right).
\label{hj}
\end{equation}
Note that (\ref{hj}) implies that our network is stable when
\begin{equation}
\lambda_{0}\left(\frac{1}{\nu_{1}}+\frac{1}{\nu_{2}}\right)\frac{\tau}{\tau+\gamma}+\lambda_{1}\left(\frac{1}{\nu_{1}}+\frac{1}{\nu_{2}}\right)\frac{\gamma}{\tau+\gamma}<\frac{\tau}{\tau+\gamma},
\label{stab}
\end{equation}
which can be explained by realizing that the left hand side of (\ref{stab}) equals the amount of work brought into the system per time unit, and in order the system to be stable, should be less than the amount of work departing the system per time unit. 

Let $\rho_{kj}:=\frac{\lambda_{k}}{\nu_{j}}$, $k=0,1$, $j=1,2$, and $\rho_{k}=\rho_{k1}+\rho_{k2}$, $k=1,2$. Then, (\ref{hj}) is rewritten as
\begin{displaymath}
\Pi_{0}(0,0)=\frac{\tau}{\tau+\gamma}(1-\frac{\rho_{0}\tau+\rho_{1}\gamma}{\tau}).
\end{displaymath}
\begin{remark}
Note that $\lambda_{0}\left(\frac{1}{\nu_{1}}+\frac{1}{\nu_{2}}\right)\frac{\tau}{\tau+\gamma}$ (resp. $\lambda_{1}\left(\frac{1}{\nu_{1}}+\frac{1}{\nu_{2}}\right)\frac{\gamma}{\tau+\gamma}$) refers to the amount of work that arrive at the system per time unit when the network is in the operating mode (resp. in the setup mode), while a job can depart from the network only when it is in the operating mode, and this is happening with probability $\tau/(\tau+\gamma)$.
\end{remark}
\section{The cases $p=0$ and 1}\label{sec:sp}
When $p=0$ (resp. $p=1$), the model can be seen as a tandem queues served by a single server, in which preemptive
priority is given to station 2 (resp. station 1). It is easily seen that in such cases, the functional equation (\ref{f1}) can be easily solved since either the coefficient of $\Pi_{0}(0,y)$ (when $p=0$), or the one of $\Pi_{0}(x,0)$ (when $p=1$) is equal to zero. 

In case $p= 0$ (i.e., $B(x,y)=0$), upon a service completion in station 1, the server continues serving the customer in station 2, since station 2 has priority. Thus, in such a case our system reduces to an unreliable queueing system, in which the service time
consists of two exponential phases with parameters
$\nu_{1}$ and $\nu_{2}$, respectively. Note that for $p=0$,
\begin{displaymath}
y:=\xi(x)=\frac{\nu_{2}xD(x)}{xD(x)(\nu_{2}+\lambda_{0}(1-x))+\lambda_{1}\gamma x(1-x)},
\end{displaymath}
vanishes the left-hand side of (\ref{f1}), and yields
\begin{displaymath}
\Pi_{0}(x,0)=\left(\frac{\tau}{\tau+\gamma}\right)\frac{(1-\frac{\rho_{0}\tau+\rho_{1}\gamma}{\tau})\nu_{1}\xi(x)(\xi(x)-x)}{\nu_{2}x(\xi(x)-1)+\nu_{1}\xi(x)(x-\xi(x))}.
\end{displaymath}
Substituting back in (\ref{f1}) yields
\begin{equation}
\begin{array}{rl}
\Pi_{0}(x,y)=&\frac{(\frac{\tau}{\tau+\gamma})(1-\frac{\rho_{0}\tau+\rho_{1}\gamma}{\tau})D(x)\nu_{1}\nu_{2}}{D(x)[\lambda_{0}y(1-x)+\nu_{2}(y-1)]+\lambda_{1}\gamma y(1-x)}\vspace{2mm}\\&\times\left\{\frac{\xi(x)(y-1)(\xi(x)-x)+y(x-y)\nu_{2}(\xi(x)-1)}{\nu_{2}x(\xi(x)-1)+\nu_{1}\xi(x)(x-\xi(x))}\right\},\vspace{2mm}\\
\Pi_{1}(x,y)=&\frac{\gamma}{D(x)}\Pi_{0}(x,y).\end{array}
\end{equation}

The case $p=1$ is even more interesting and corresponds to an unreliable tandem queue attended by a single server and preemptive priority for the first station. That is, if upon a customer arrival the server is at the second station, it switches immediately to the first one. Moreover, upon a setup completion, after a global breakdown, the server will start serving at the first station if there are customers waiting. And this is the case even if a breakdown occurs when was serving a customer at the second station. To the author's best knowledge, that case has never considered before. For $p=1$, (\ref{f1}) reduces to
\begin{equation}
\begin{array}{l}
[D(x)(\lambda_{0}x(1-x)+\nu_{1}(x-y))+\lambda_{1}\gamma x(1-x)]y\Pi_{0}(x,y)=\vspace{2mm}\\D(x)\{\Pi_{0}(0,y)[\nu_{2}x(1-y)+\nu_{1}y(x-y)]+\Pi_{0}(0,0)\nu_{2}x(y-1)\}.
\end{array}
\label{sxx}
\end{equation}
Let $x:=u(y)$ the unique root of $D(x)(\lambda_{0}x(1-x)+\nu_{1}(x-y))+\lambda_{1}\gamma x(1-x)=0$ inside the unit circle. Then, the right-hand side should also vanish and thus,
\begin{displaymath}
\Pi_{0}(0,0)=\Pi_{0}(0,y)[1-\frac{\nu_{1}y(u(y)-y)}{\nu_{2}u(y)(1-y)}].
\end{displaymath}
Substituting back in (\ref{sxx}) yields
\begin{equation}
\begin{array}{rl}
\Pi_{0}(x,y)=&\frac{D(x)\nu_{1}y(x-u(y))}{u(y)[D(x)(\lambda_{0}x(1-x)+\nu_{1}(x-y))+\lambda_{1}\gamma x(1-x)]}\Pi_{0}(0,y),\vspace{2mm}\\
\Pi_{1}(x,y)=&\frac{\gamma}{D(x)}\Pi_{0}(x,y).
\end{array}
\end{equation}
\section{The case $0<p<1$: Power Series Approximation in $p$}\label{sec:psa}
In the following, we are going to construct a power series expansion of the pgf $\Pi_{0}(x,y)$ in $p$ starting by (\ref{f2}). Then, having that result we are able to construct power series expansions of $\Pi_{1}(x,y)$ in $p$ using the second in (\ref{f1}). With that in mind, let
\begin{equation}
\Pi_{j}(x,y)=\sum_{m=0}^{\infty}V_{m}^{(j)}(x,y)p^{m},\,j=0,1.
\end{equation} 
Our aim in the following, is to obtain $V_{m}^{(j)}(x,y)$, $m\geq0$, $j=0,1$, by employing an approach similar to the one developed in \cite{walra,walr,dim}\footnote{See Appendix \ref{appe3} for the analyticity of $\Pi_{j}(x,y)$ close to $p=0$.}. Equation (\ref{f2}) is rewritten as
\begin{equation}
\begin{array}{c}
G(x,y)\Pi_{0}(x,y)-G_{10}(x,y)\Pi_{0}(x,0)-G_{00}(x,y)\Pi_{0}(0,0)\vspace{2mm}\\
=pG_{10}(x,y)[\Pi_{0}(x,y)-\Pi_{0}(x,0)-\Pi_{0}(0,y)+\Pi_{0}(0,0)],
\end{array}
\label{f4}
\end{equation}
where,
\begin{displaymath}
\begin{array}{rl}
G(x,y)=&D(x)[\lambda_{0}y(1-x)+\nu_{2}(y-1)]+\lambda_{1}\gamma y(1-x),\\
G_{10}(x,y)=&D(x)[\nu_{2}(y-1)-\nu_{1}y(1-yx^{-1})],\\
G_{00}(x,y)=&D(x)\nu_{1}y(1-yx^{-1}).
\end{array}
\end{displaymath}

The major difficulty in solving (\ref{f2}) corresponds to the presence of the two unknown boundary
functions $\Pi_{0}(x,0)$, $\Pi_{0}(0,y)$. Having in mind that in the left-hand side of (\ref{f4}) there is only one boundary function, we are able to follow the approach in \cite{walr,walra,dim}. The next Theorem summarizes the basic result of this section.
\begin{theorem}\label{th}
Under stability condition (\ref{stab}),
\begin{equation}
\begin{array}{rl}
V_{0}^{(0)}(x,y)=&\frac{D(x)\nu_{1}\nu_{2}(\tilde{Y}(x)-x)(\tilde{Y}(x)(y-1)+x-y)}{G(x,y)[\nu_{2}x(\tilde{Y}(x)-1)-\nu_{1}\tilde{Y}(x)(x-\tilde{Y}(x))]}V^{(0)}(0,0),\vspace{2mm}\\
V_{m}^{(0)}(x,y)=&\frac{G_{10}(x,y)}{G(x,y)}Q_{m-1}(x,y),\,m>0,\vspace{2mm}\\
V_{m}^{(1)}(x,y)=&\frac{\gamma}{D(x)}V_{m}^{(0)}(x,y),\,m\geq0,
\end{array}
\label{e0}
\end{equation}
where 
\begin{displaymath}
\begin{array}{rl}
\tilde{Y}(x)=&\frac{\nu_{2}D(x)}{D(x)(\nu_{2}+\lambda_{0}(1-x))+\lambda_{1}\gamma(1-x)},\\
Q_{m}(x,y)=&V_{m}^{(0)}(x,y)-V_{m}^{(0)}(x,\tilde{Y}(x))-V_{m}^{(0)}(0,y)+V_{m}^{(0)}(0,\tilde{Y}(x)),\,m\geq0,
\end{array}
\end{displaymath}
and $Q_{-1}(x,y):=0$.
\end{theorem}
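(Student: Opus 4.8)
The plan is to construct the power series expansion of $\Pi_0(x,y)$ in $p$ by substituting the ansatz $\Pi_j(x,y)=\sum_{m\geq0}V_m^{(j)}(x,y)p^m$ into the reformulated equation~(\ref{f4}) and matching coefficients of $p^m$ order by order. The key structural observation that makes this recursion work is that the left-hand side of~(\ref{f4}) contains only a single boundary unknown, namely $\Pi_0(x,0)$ (together with the constant $\Pi_0(0,0)$), whereas the second boundary function $\Pi_0(0,y)$ has been collected into the right-hand side carrying an explicit factor of $p$. Consequently, when we extract the coefficient of $p^m$, the right-hand side contributes only terms built from $V_{m-1}^{(j)}$ and lower, so the unknowns at order $m$ are determined by already-computed quantities at order $m-1$. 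This is exactly why the recursion is well posed and why $Q_{m-1}$ appears in the formula for $V_m^{(0)}$ with the convention $Q_{-1}:=0$.

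First I would treat the base case $m=0$. Setting $p=0$ in~(\ref{f4}) kills the entire right-hand side and leaves the homogeneous-in-$p$ relation
\begin{displaymath}
G(x,y)V_0^{(0)}(x,y)=G_{10}(x,y)V_0^{(0)}(x,0)+G_{00}(x,y)V_0^{(0)}(0,0).
\end{displaymath}
This is a single functional equation in the one boundary unknown $V_0^{(0)}(x,0)$. To eliminate $V_0^{(0)}(x,y)$ I would substitute the kernel root $y=\tilde Y(x)$, defined so that $G(x,\tilde Y(x))=0$; note this is precisely the $p=0$ specialization of the curve used for the special case analysis in Section~\ref{sec:sp}. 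Vanishing of $G$ forces the right-hand side to vanish as well, which yields a closed expression for $V_0^{(0)}(x,0)$ in terms of $V_0^{(0)}(0,0)$. Substituting that expression back and simplifying should reproduce the stated closed form for $V_0^{(0)}(x,y)$, with $V^{(0)}(0,0)$ fixed by the normalization already computed in~(\ref{hj}).

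For the inductive step $m>0$ I would extract the coefficient of $p^m$ from~(\ref{f4}). The left-hand side yields $G(x,y)V_m^{(0)}(x,y)-G_{10}(x,y)V_m^{(0)}(x,0)-G_{00}(x,y)V_m^{(0)}(0,0)$, while the right-hand side, because of the prefactor $p$, reads $G_{10}(x,y)\,Q_{m-1}(x,y)$ with $Q_{m-1}$ assembled from the four corner evaluations of the known function $V_{m-1}^{(0)}$. To solve for the boundary unknown $V_m^{(0)}(x,0)$ I again insert $y=\tilde Y(x)$: the terms carrying $G(x,y)$ and the right-hand side (which also carries the factor $G_{10}$ but is annihilated once we observe the structure of $Q_{m-1}$ at $y=\tilde Y(x)$) collapse appropriately, leaving a determination of $V_m^{(0)}(x,0)$ and hence the compact formula $V_m^{(0)}(x,y)=\frac{G_{10}(x,y)}{G(x,y)}Q_{m-1}(x,y)$. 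The relation $V_m^{(1)}=\frac{\gamma}{D(x)}V_m^{(0)}$ then follows termwise directly from the second equation in~(\ref{f1}).

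The main obstacle I anticipate is twofold. The first is purely technical: verifying that the substitution $y=\tilde Y(x)$ genuinely yields an analytic and single-valued expression for the boundary function, i.e. that $\tilde Y(x)$ maps the relevant domain into the region where the generating functions converge, and that the resulting fractions have no spurious poles inside the unit bidisk so that $V_m^{(0)}(x,y)$ remains a legitimate coefficient in the expansion. The second and more delicate point is justifying the convergence and analyticity of the full power series in $p$ near $p=0$, so that the formal termwise matching is actually valid rather than merely formal; the excerpt defers this to Appendix~\ref{appe3}, and I would invoke that analyticity result to legitimize the interchange of summation and the coefficient extraction, treating the recursion itself as the algebraic heart of the proof.
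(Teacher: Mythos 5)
Your overall strategy is exactly the paper's: expand in powers of $p$ in (\ref{f4}), exploit that only the single boundary unknown $\Pi_0(x,0)$ appears without a $p$-prefactor, eliminate it via the kernel root $y=\tilde{Y}(x)$ of $G(x,y)=0$, obtain the constants from (\ref{hj}), get $V_m^{(1)}$ termwise from the second equation in (\ref{f1}), and invoke the analyticity result of Appendix~\ref{appe3} to legitimize the termwise matching. The base case $m=0$ is handled correctly. However, your inductive step contains a genuine error: the coefficient of $p^m$ on the right-hand side of (\ref{f4}) is \emph{not} $G_{10}(x,y)Q_{m-1}(x,y)$; it is $G_{10}(x,y)P_{m-1}(x,y)$, where $P_{m-1}(x,y)=V_{m-1}^{(0)}(x,y)-V_{m-1}^{(0)}(x,0)-V_{m-1}^{(0)}(0,y)+V_{m-1}^{(0)}(0,0)$, whose evaluations sit on the boundary $y=0$, not at $y=\tilde{Y}(x)$ --- indeed $\tilde{Y}(x)$ cannot appear from mere coefficient extraction, since it occurs nowhere in (\ref{f4}). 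The function $Q_{m-1}$ only emerges afterwards: inserting $y=\tilde{Y}(x)$ into the order-$m$ identity yields $V_m^{(0)}(x,0)=-\frac{G_{00}(x,\tilde{Y}(x))}{G_{10}(x,\tilde{Y}(x))}V_m^{(0)}(0,0)-P_{m-1}(x,\tilde{Y}(x))$; one then needs $V_m^{(0)}(0,0)=0$ for $m>0$ (a fact you never state, which follows from (\ref{hj}) because $\Pi_0(0,0)$ does not depend on $p$, and which is required to kill the $G_{00}$ term), and upon back-substitution the combination $P_{m-1}(x,y)-P_{m-1}(x,\tilde{Y}(x))$ telescopes --- the $(x,0)$ and $(0,0)$ terms cancel --- into exactly $Q_{m-1}(x,y)$.

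This conflation is not cosmetic: as written, your step is internally inconsistent. If the order-$m$ right-hand side really were $G_{10}Q_{m-1}$, then setting $y=\tilde{Y}(x)$, where $Q_{m-1}$ vanishes identically by construction, together with $V_m^{(0)}(0,0)=0$, would force $V_m^{(0)}(x,0)\equiv 0$; but the formula you are trying to prove gives $V_m^{(0)}(x,0)=\frac{G_{10}(x,0)}{G(x,0)}Q_{m-1}(x,0)=Q_{m-1}(x,0)$ (since $G_{10}(x,0)=G(x,0)=-\nu_2 D(x)$), which is not identically zero in general. So the ``annihilation of the right-hand side'' you invoke cannot coexist with the recursion being proved. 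The repair is straightforward and is the paper's route: extract $P_{m-1}$ from the coefficient matching, eliminate $V_m^{(0)}(x,0)$ at $y=\tilde{Y}(x)$, observe $V_m^{(0)}(0,0)=0$ for $m>0$, and only then rewrite $P_{m-1}(x,y)-P_{m-1}(x,\tilde{Y}(x))=Q_{m-1}(x,y)$ to arrive at $V_m^{(0)}(x,y)=\frac{G_{10}(x,y)}{G(x,y)}Q_{m-1}(x,y)$.
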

\begin{proof}
The proof follows the lines in \cite{walr,dim}. Note that $\Pi_{0}(x,y)$ is analytic function of $p$ in a neighbourhood of $0$. We start by expressing $\Pi_{0}(x,y)$ in power series expansion of $p$ by using (\ref{f4}), and equating the corresponding powers of $p$ at both sides. This yields
\begin{equation}
\begin{array}{rr}
V_{m}^{(0)}(x,y)G(x,y)=&G_{10}(x,y)[V_{m}^{(0)}(x,0)+P_{m-1}(x,y)]\\&+G_{00}(x,y)V_{m}^{(0)}(0,0),\,m\geq0,
\end{array}
\label{e1}
\end{equation}
where
\begin{displaymath}
P_{m}(x,y)=V_{m}^{(0)}(x,y)-V_{m}^{(0)}(x,0)-V_{m}^{(0)}(0,y)+V_{m}^{(0)}(0,0),
\end{displaymath}
with $P_{-1}(x,y)=0$. Note that $G(x,y)=0$ has a unique zero $y=\tilde{Y}(x)$ such that
\begin{displaymath}
\tilde{Y}(x)=\frac{\nu_{2}D(x)}{D(x)(\nu_{2}+\lambda_{0}(1-x))+\lambda_{1}\gamma(1-x)}.
\end{displaymath}
It is easy to realize that $|\tilde{Y}(x)|<1$, for $|x|=1$. Substituting in (\ref{e1}) we eliminate its left-hand side yielding
\begin{equation}
V_{m}^{(0)}(x,0)=-\frac{G_{00}(x,\tilde{Y}(x))}{G_{10}(x,\tilde{Y}(x))}V_{m}^{(0)}(0,0)-P_{m-1}(x,y).
\label{e2}
\end{equation}
Substituting (\ref{e2}) back in (\ref{e1}) yields the first in (\ref{e0}). Then, using the second equation in (\ref{f1}) we derive the coefficients $V_{m}^{(1)}(x,y)$ in terms of $V_{m}^{(0)}(x,y)$ as given in the second in (\ref{e0}). From (\ref{hj}) it is readily seen that
\begin{displaymath}
\begin{array}{rl}
V_{0}^{(0)}(0,0)=&\frac{\tau}{\tau+\gamma}-\left(\lambda_{0}\frac{\tau}{\tau+\gamma}+\lambda_{1}\frac{\gamma}{\tau+\gamma}\right)\left(\frac{1}{\nu_{1}}+\frac{1}{\nu_{2}}\right),\\
V_{m}^{(0)}(0,0)=&0,\,m>0.
\end{array}
\end{displaymath}
\end{proof}
\subsection{Performance metrics}
We focus on the mean queue lengths given by
\begin{equation}
\begin{array}{rl}
E(Q_{1})=&\sum_{m=0}^{\infty}p^{m}\frac{\partial}{\partial x}[V_{m}^{(0)}(x,1)+V_{m}^{(1)}(x,1)]|_{x=1}\\=&\frac{\lambda_{1}\gamma}{\tau(\tau+\gamma)}+(1+\frac{\gamma}{\tau})\sum_{m=0}^{\infty}p^{m}\frac{\partial}{\partial x}V_{m}^{(0)}(x,1)|_{x=1},\vspace{2mm}\\
E(Q_{2})=&\sum_{m=0}^{\infty}p^{m}\frac{\partial}{\partial y}[V_{m}^{(0)}(1,y)+V_{m}^{(1)}(1,y)]|_{y=1}\vspace{2mm}\\=&(1+\frac{\gamma}{\tau})\sum_{m=0}^{\infty}p^{m}\frac{\partial}{\partial y}V_{m}^{(0)}(1,y)|_{y=1},
\end{array}
\label{per}
\end{equation}
Let
\begin{displaymath}
v_{m,1}=\frac{\partial}{\partial x}V_{m}^{(0)}(x,1)|_{x=1},\,v_{m,2}=\frac{\partial}{\partial y}V_{m}^{(0)}(1,y)|_{y=1}.
\end{displaymath}
Truncation of the power series in (\ref{per}) yields,
\begin{equation}
\begin{array}{rl}
E(Q_{1})=&\frac{\lambda_{1}\gamma}{\tau(\tau+\gamma)}+(1+\frac{\gamma}{\tau})\sum_{m=0}^{M}p^{m}v_{m,1}+O(p^{M+1}),\\
E(Q_{2})=&(1+\frac{\gamma}{\tau})\sum_{m=0}^{\infty}p^{m}v_{m,2}+O(p^{M+1}).
\end{array}
\label{pert}
\end{equation}
Truncation yields accurate approximations for $p$ close to 0. However, we have to note the actual calculation of the expressions in
(\ref{per}) requires the computation of the first derivatives of $V_{m}^{(0)}(x,y)$ for $m\geq 0$. Although we provided an algorithm (see Theorem \ref{th}) to calculate these coefficients, the calculation of their first derivatives is far from straightforward due to the extensive use of L’Hopital’s rule.
\section{The case $0<p<1$: A Riemann-Hilbert Boundary Value Problem}\label{sec:bvp}
In the following, we proceed with the determination of $\Pi_{j}(x,y)$, $j=0,1,$ $|x|\leq1$, $|y|\leq1$ with the aid of the theory of boundary value problems (BVP). In particular, we first obtain $\Pi_{0}(x,y)$ in terms of the solution of a Riemann-Hilbert boundary value problem by using (\ref{f2}), and then, we use the second in (\ref{f1}), to finally derive $\Pi_{1}(x,y)$. 

A key step to analyze the functional equation (\ref{f2}) is the careful examination of the algebraic curve defined by the kernel equation,
\begin{equation}
H(x,y):=D(x)R(x,y)-\tau\gamma xy=0.
\label{ker}
\end{equation}
It is easily seen that $H(x,y)$ is a polynomial of third degree in $x$, and of second degree in $y$. The study of $H(x,y)=0$ (see Appendix \ref{app1}) allows to continue the unknown functions
$\Pi_{0}(x, 0)$ , $\Pi_{0}(0,y)$ analytically outside the unit
disk, and to reduce their determination to a Dirichlet boundary value problem.

Let $\mathcal{C}_{x}=\{x\in\mathbb{C}:|x|=1\}$, $\mathcal{C}_{y}=\{y\in\mathbb{C}:|y|=1\}$, $\mathcal{D}_{x}=\{x\in\mathbb{C}:|x|\leq1\}$, $\mathcal{D}_{y}=\{y\in\mathbb{C}:|y|\leq1\}$, and denote by $\mathcal{U}^{+}$ (resp. $\mathcal{U}^{-}$) the interior (resp. the exterior) domain bounded by the contour $\mathcal{U}$. Then the following lemma provides information about the location of the zeros of the kernel $H(x,y)$.
\begin{lemma}\label{lem}
If $y\in\mathcal{C}_{y}$ (resp. $x\in\mathcal{C}_{x}$), $H(x,y)=0$ has a unique root, say $X_{0}(y)\in\mathcal{D}_{x}$ (resp. $Y_{0}(x)\in\mathcal{D}_{y}$).
\end{lemma}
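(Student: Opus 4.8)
The plan is to establish both assertions by Rouché's theorem (equivalently, the argument principle), handling the modulation factor $D(x)$ and the critical point $(x,y)=(1,1)$ separately. Throughout I use that $D(x)=\lambda_{1}(1-x)+\tau$ has its only zero at $x^{*}=1+\tau/\lambda_{1}$, which lies strictly outside $\mathcal{D}_{x}$; hence on $\mathcal{D}_{x}$ the zeros of $H(x,y)$ coincide with those of the (there analytic) function $R(x,y)-\tau\gamma xy/D(x)$, so that the extra degree carried by $D$ contributes no interior zero. The single genuine difficulty is that $H(1,1)=0$, reflecting the criticality of the ergodic walk: every dominance estimate below is tight exactly there, and the stability condition (\ref{stab}) is what decides on which side of the circle the relevant root sits.

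For the root $Y_{0}(x)$ (fix $x\in\mathcal{C}_{x}$) I would write the kernel as a quadratic in $y$, $H(x,y)=a(x)y^{2}+b(x)y+c(x)$, with $a(x)=-\nu_{1}p\,D(x)$, $c(x)=-\nu_{2}(1-p)x\,D(x)$, and, after using $D(x)-\tau=\lambda_{1}(1-x)$,
\[
b(x)=x\bigl[(1-x)\bigl(\lambda_{0}D(x)+\gamma\lambda_{1}\bigr)+\sigma D(x)\bigr],\qquad \sigma:=\nu_{1}p+\nu_{2}(1-p).
\]
On $|x|=1$ one has $|a(x)|+|c(x)|=\sigma|D(x)|$, so the decisive estimate is $\bigl|\sigma D(x)+(1-x)(\lambda_{0}D(x)+\gamma\lambda_{1})\bigr|\ge\sigma|D(x)|$. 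I would prove this by expanding $|b(x)/x|^{2}-\sigma^{2}|D(x)|^{2}$ and invoking the circle identities $|1-x|^{2}=2\,\mathrm{Re}(1-x)$ and $\mathrm{Re}((1-x)^{2})=-2\cos\theta\,\mathrm{Re}(1-x)$ (with $x=e^{i\theta}$); the cross term then collapses to $\mathrm{Re}(1-x)$ times a manifestly nonnegative combination of the rates, giving the inequality with equality only at $x=1$. For $x\neq1$ this yields the strict bound $|b(x)|>|a(x)|+|c(x)|\ge|a(x)y^{2}+c(x)|$ on $|y|=1$, and Rouché (comparing $b(x)y$, whose single zero is at the origin, with $a(x)y^{2}+c(x)$) gives exactly one root in $|y|<1$, namely $Y_{0}(x)$; the same strict bound rules out zeros on $\mathcal{C}_{y}$.

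For the root $X_{0}(y)$ (fix $y\in\mathcal{C}_{y}$) the kernel is cubic in $x$, and I would count its zeros in $\mathcal{D}_{x}$ by a homotopy in the breakdown rate $\gamma$. At $\gamma=0$ the modulation disappears and $H=D(x)R_{0}(x,y)$, where $R_{0}$ is precisely the kernel of the reliable tandem queue with coupled processors; by the known result for that model (\cite{res}), $R_{0}(\cdot,y)$ has a unique zero in $\mathcal{D}_{x}$ for $|y|=1$, while the zero of $D$ sits outside, so the base count is one. Raising $\gamma$ to its true value moves the three roots continuously, and the count in $\mathcal{D}_{x}$ can change only if a root crosses $\mathcal{C}_{x}$; since (\ref{stab}) holds along the whole path, it suffices to show $H(x,y)\neq0$ whenever $|x|=|y|=1$ and $(x,y)\neq(1,1)$. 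This non-vanishing on the torus is the technical heart: writing $H=0$ as $D(x)R(x,y)=\tau\gamma xy$ and passing to moduli, the triangle inequality together with the sub-stochastic structure of the walk forces equality, hence $x=y=1$, exactly as in the curve analysis of Appendix \ref{app1}.

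The step I expect to be the main obstacle is the behaviour at $(1,1)$, where all the above estimates are tight. At $x=1$ the quadratic factors as $H(1,y)=\tau(y-1)\bigl(\nu_{2}(1-p)-\nu_{1}p\,y\bigr)$, so besides $y=1\in\mathcal{C}_{y}$ there is a second root $\nu_{2}(1-p)/(\nu_{1}p)$; deciding which of the two is the limit of $Y_{0}(x)$ as $x\to1$, and likewise locating, among the two non-trivial roots of $H(x,1)=(1-x)\,(\text{quadratic})$, the one that limits $X_{0}(y)$ as $y\to1$, requires the mean-drift information encoded in (\ref{stab}). I would settle this by perturbing slightly off the critical point (replacing each contour by one of radius $1-\varepsilon$, or expanding the roots to first order near $(1,1)$) and reading the sign of the drift from the stability inequality, thereby confirming that exactly one root remains in the closed disk and that $Y_{0}$ and $X_{0}$ extend continuously, indeed analytically, across the critical point.
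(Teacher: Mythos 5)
Your proposal is correct in outline, and it takes a genuinely different route from the paper. The paper's own proof (Appendix~\ref{app1}) is a two-line argument exploiting the modulated structure directly: it factors the kernel as $H(x,y)=xy\{D(x)u(x,y)-\gamma\tau\}$ with $u(x,y)=\lambda_{0}(1-x)+\nu_{1}p(1-y/x)+\nu_{2}(1-p)(1-1/y)+\gamma$, and then applies the argument principle treating $\gamma\tau$ as the perturbation of $D(x)u(x,y)$ --- the point being that on the torus $|D(x)|\geq\tau$ and $|u(x,y)|\geq\mathrm{Re}\,u(x,y)\geq\gamma$, with equality only at $(1,1)$, so the zero count of $H$ in $\mathcal{D}_{x}$ equals that of $D(x)u(x,y)$, which is one since the zero $1+\tau/\lambda_{1}$ of $D$ lies outside the disk. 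Your two arguments replace this: for $Y_{0}(x)$ you run Rouch\'e on the quadratic in $y$ via the middle-coefficient dominance $|b(x)|>|a(x)|+|c(x)|=\sigma|D(x)|$ for $x\in\mathcal{C}_{x}\setminus\{1\}$, and this inequality is indeed true --- I checked that with $b(x)/x=A+Bw+Cw^{2}$, $w=1-x$, $A=\sigma\tau$, $B=\sigma\lambda_{1}+\lambda_{0}\tau+\gamma\lambda_{1}$, $C=\lambda_{0}\lambda_{1}$, the difference of squared moduli equals $2\,\mathrm{Re}(w)$ times a bracket that is linear and decreasing in $\cos\theta$, hence minimized at $\cos\theta=1$ where it equals $(B-\sigma\lambda_{1})(B+\sigma\lambda_{1}+A)-2AC=\lambda_{0}\tau(\lambda_{0}\tau+\gamma\lambda_{1}+\sigma\tau)+\gamma\lambda_{1}(2\sigma\lambda_{1}+\lambda_{0}\tau+\gamma\lambda_{1}+\sigma\tau)>0$; note the bracket is \emph{not} termwise nonnegative (your ``manifestly nonnegative combination'' needs this one extra minimization line), but the conclusion stands, and it buys you the strict statement $|Y_{0}(x)|<1$ for $x\neq1$, which the paper's count alone does not exhibit so explicitly. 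For $X_{0}(y)$ your homotopy in $\gamma$ anchored at the reliable tandem kernel of \cite{res} is legitimate: the torus non-vanishing of $H$ away from $(1,1)$ holds for every intermediate $\gamma'\in[0,\gamma]$ by exactly the modulus argument above, and in fact holds \emph{unconditionally} in the positive rates --- your appeal to (\ref{stab}) along the path is a red herring there (though it is true that (\ref{stab}) persists as $\gamma$ decreases, since $\rho_{0}\tau+\rho_{1}\gamma'$ is increasing in $\gamma'$); the stability condition genuinely enters only at the critical point $(1,1)$, i.e.\ for $y=1$ and for the limit $Y_{0}(1)$, which you correctly isolate as the one place where all estimates are tight. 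There, both you and the paper only sketch: the paper glosses the criticality entirely, while your $1-\varepsilon$ contour (or first-order root expansion with the drift sign read off (\ref{stab})) is the standard resolution and would close the argument; your observation that $H(1,y)=\tau(y-1)(\nu_{2}(1-p)-\nu_{1}p\,y)$ is correct and shows that at $x=1$ the closed disk may actually contain a second root when $\nu_{2}(1-p)<\nu_{1}p$, so the uniqueness claim is to be read for $x\neq1$ with $Y_{0}(1)$ defined by continuity --- a caveat the paper's statement and proof silently share. In sum: the paper's factorization gives a shorter unified count tailored to the modulated kernel; your decomposition gives explicit, verifiable inequalities for the $y$-root and outsources the cubic $x$-count to the known unperturbed model, at the price of a longer argument and a still-unexecuted (but standard) treatment of the critical point.
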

\begin{proof}
See Appendix \ref{app1}
\end{proof}
By the implicit function theorem, we see that
the algebraic function $Y(x)$ (respectively $X(y)$) defined by $H(x,Y(x))=0$ (resp. $H(X(y),y)=0$) is analytic except at branch
points. Denote $X_{1}(y)$, $X_{2}(y)$ the other two in $x$, with $|X_{1}(1)|<|X_{2}(1)|$, by $Y_{1}(x)$ the other one in $y$.
\begin{lemma}\label{lem2}
The algebraic function $Y(x)$, defined by
$H(x,Y(x))=0$, has six real positive branch points, and two of them, say $x_{1}$, $x_{2}$, are such that $0=x_{1}<x_{2}<1$. 
\end{lemma}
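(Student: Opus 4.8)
The plan is to analyze the branch points of $Y(x)$ as the zeros of the discriminant of $H(x,y)$ viewed as a quadratic polynomial in $y$. Since Lemma \ref{lem2} asserts that $H(x,y)$ is of second degree in $y$, I would write $H(x,y) = a(x)y^{2} + b(x)y + c(x)$ by collecting terms from (\ref{ker}), using the explicit form of $R(x,y)$ in (\ref{ui}) and $D(x)=\lambda_{0}(1-x)+\tau$ (note: the excerpt writes $D(x)=\lambda_{1}(1-x)+\tau$). The branch points of $Y(x)$ are precisely the roots of the discriminant $\Delta(x) := b(x)^{2} - 4a(x)c(x)$, together with any points where the leading coefficient $a(x)$ vanishes (which correspond to a branch point ``at infinity''). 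The first step, therefore, is to compute $\Delta(x)$ explicitly and determine its degree; since $a$, $b$, $c$ are each at most cubic in $x$ (because $H$ is cubic in $x$), $\Delta(x)$ will be a polynomial whose degree I expect to be six after accounting for the branch point at infinity, matching the claimed count of six real positive branch points.

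The main work is then to establish three properties of these six roots: that they are all real, all nonnegative, and that exactly two of them, $x_{1}=0$ and $x_{2}$, lie in $[0,1)$ with $x_{1}<x_{2}<1$. To see that $x_{1}=0$ is a branch point, I would evaluate $\Delta(0)$ directly and check it vanishes; this is natural since at $x=0$ the kernel degenerates and the two $y$-roots should coalesce. For reality and positivity, the strategy is a sign-analysis of $\Delta(x)$ on the real line combined with behavior at the endpoints: I would evaluate $\Delta$ and its relevant limits at the distinguished points $x=0$, $x=1$, and $x\to\pm\infty$, and use the intermediate value theorem to locate sign changes, thereby counting real roots in each interval. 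A cleaner route to reality, which I would prefer if the algebra cooperates, is to argue probabilistically or via the structure of the random walk: for a recurrent or stable ergodic modulated walk, the kernel curve $H(x,y)=0$ is a real algebraic curve of genus zero, and its branch points are forced to be real — this is the standard situation in the Fayolle--Iasnogorodski--Malyshev framework \cite{bv,fay1}. Establishing genus zero (six branch points on a curve of bidegree $(3,2)$) is itself consistent with the Riemann--Hurwitz count and supports the reduction to a Riemann--Hilbert problem announced in the section.

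To pin down the location $0=x_{1}<x_{2}<1$, the key inequality I would verify is $\Delta(1)$: I expect $\Delta(1)>0$ (using the stability condition (\ref{stab}), which controls the drift of the walk at $x=y=1$), while $\Delta'(x)$ or the local behavior near $0$ shows $\Delta$ changes sign exactly once on $(0,1)$, producing the single interior branch point $x_{2}$. The remaining four branch points would then be shown to lie in $[1,\infty)$ (or to be forced real and positive by the global sign pattern of $\Delta$), so that $X_{0}(y)$ stays analytic on the relevant contour and the cut $[x_{1},x_{2}]$ is the one along which the conformal gluing for the boundary value problem is performed.

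The hard part will be the explicit discriminant computation and the resulting sign bookkeeping: because $D(x)$ multiplies $R(x,y)$ and the correction term $-\tau\gamma xy$ shifts the coefficients, the polynomials $a(x)$, $b(x)$, $c(x)$ are genuinely cubic and $\Delta(x)$ is a degree-six polynomial with coefficients that are intricate functions of $\lambda_{0},\lambda_{1},\nu_{1},\nu_{2},\gamma,\tau,p$. Verifying that all six roots are real and positive purely algebraically is delicate; I anticipate that the cleanest argument combines (i) the factorization $\Delta(x)=x\cdot\tilde{\Delta}(x)$ to extract $x_{1}=0$, with (ii) a drift/stability argument at $x=1$ to force $x_{2}<1$, and (iii) an asymptotic sign check as $x\to\infty$ together with the genus-zero structure of the kernel curve to guarantee the remaining roots are real and lie beyond $1$. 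Isolating $x_{1}=0$ and $x_{2}\in(0,1)$ explicitly is what the subsequent boundary value problem reduction actually requires, so I would prioritize a rigorous treatment of those two roots and treat the reality of the other four as following from the standard ergodic-walk geometry.
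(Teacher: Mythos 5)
Your opening step coincides with the paper's: the branch points of $Y(x)$ are the zeros of the discriminant of $H$ viewed as a quadratic in $y$, which works out to $\Delta(x)=x^{2}\bigl[D(x)(\lambda_{0}(1-x)+\gamma+p\nu_{1}+(1-p)\nu_{2})-\tau\gamma\bigr]^{2}-4p(1-p)\nu_{1}\nu_{2}\,x\,D^{2}(x)$, a degree-six polynomial with an explicit factor $x$ giving $x_{1}=0$. (One correction before that: the paper's $D(x)=\lambda_{1}(1-x)+\tau$ is right, not a typo to be fixed to $\lambda_{0}$ — it comes from the setup-state balance equation $(\lambda_{1}+\tau)\pi_{1}(n,k)=\gamma\pi_{0}(n,k)+\lambda_{1}\pi_{1}(n-1,k)$.) Where your plan has genuine gaps is in locating $x_{2}$ and in handling the remaining roots. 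First, your key inequality ``$\Delta(1)>0$ using the stability condition (\ref{stab})'' is doubly off: direct evaluation gives $\Delta(1)=\tau^{2}\bigl(p\nu_{1}-(1-p)\nu_{2}\bigr)^{2}\geq0$ \emph{identically}, so stability plays no role, and the quantity \emph{vanishes} in the symmetric case $p\nu_{1}=(1-p)\nu_{2}$, where $x=1$ is itself a branch point and your intermediate-value argument no longer delivers $x_{2}<1$. Your IVT idea does work generically, since $\Delta(x)/x$ at $x=0$ equals $-4p(1-p)\nu_{1}\nu_{2}(\lambda_{1}+\tau)^{2}<0$, but you never actually establish that the sign change on $(0,1)$ is unique — ``$\Delta'(x)$ or the local behavior near $0$ shows'' is not an argument. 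Second, your fallback for the reality and positivity of the other four roots — the genus-zero geometry of ergodic walks in the Fayolle--Iasnogorodski--Malyshev framework \cite{bv,fay1} — does not apply as stated: that theory concerns kernels quadratic in \emph{both} variables arising from unmodulated walks in the quarter plane, whereas here the Markov modulation makes $H$ cubic in $x$, which is precisely why the paper cannot simply quote it.

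The paper's actual proof proceeds differently where yours is vague: it decomposes $\Delta(x)=x\bigl(f(x)+g(x)\bigr)$, with $f$ collecting the $D^{2}(x)$ terms and $g(x)=x\lambda_{1}(1-x)\gamma\bigl[\lambda_{1}(1-x)\gamma+2D(x)(\lambda_{0}(1-x)+p\nu_{1}+(1-p)\nu_{2})\bigr]$, shows that $g$ vanishes on $[0,1]$ only at $x=0$ and $x=1$ because the remaining quadratic factor (\ref{br}) has both roots exceeding $1$ (via product- and sum-of-roots estimates), and then transfers the count — exactly two zeros of $\Delta$ in $[0,1]$, one of them $x_{1}=0$ — from $g$ to $f+g$ by Rouch\'e's theorem. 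It is fair to note that the paper's appendix is itself terse: it really only substantiates the two-in-$[0,1]$ part of the lemma, so your instinct to prioritize $x_{1}$ and $x_{2}$ and treat the other four roots separately matches where the substance lies. But for your proposal to stand, you would need (i) the correct evaluation of $\Delta(1)$ with the degenerate case $p\nu_{1}=(1-p)\nu_{2}$ addressed or excluded, (ii) an actual count of the zeros in $(0,1)$ — e.g., a Rouch\'e comparison or a decomposition in the spirit of $f+g$ — rather than an appeal to monotonicity, and (iii) either an honest computation for the four exterior roots or an explicit acknowledgment that the standard real-branch-point results are unavailable for this modulated, cubic-in-$x$ kernel.
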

\begin{proof}
See Appendix \ref{app1}.
\end{proof}

Define the image contour, $\mathcal{L}=Y_{0}([\overrightarrow{\underleftarrow{0,x_{2}}}])$, where $[\overrightarrow{\underleftarrow{u,v}}]$ stands for the contour traversed from $u$ to $v$ along the upper edge of the slit $[u,v]$ and then back to $u$ along the lower edge of the slit. The following lemma shows that the mapping $Y(x)$,  $x\in[0,x_{2}]$, gives rise to the smooth and closed contour $\mathcal{L}$.
\begin{remark}
The study of $H(x,y)=0$ with respect to $x$ is slightly more difficult, but allows to also show that the algebraic function $X(y)$ has also two real and non-negative branch points inside $\mathcal{D}_{y}$, say $0\leq y_{1}<y_{2}<1$. Similarly, for $y\in[y_{1},y_{2}]$, $X(y)$ lies on a closed contour $\mathcal{M}$. Further details are omitted due to space limitations, and mainly due to the fact that our main contribution relies on the use of PSA method.    
\end{remark}
\begin{lemma}
For $x\in[0,x_{2}]$, the algebraic function $Y(x)$ lies on a closed contour $\mathcal{L}$, which is symmetric with respect to the real line and such that
\begin{displaymath}
|y|^{2}=\frac{(1-p)\nu_{2}}{p\nu_{1}}x,\,\,|y|^{2}\leq\frac{(1-p)\nu_{2}}{p\nu_{1}}x_{2}.
\end{displaymath}
\end{lemma}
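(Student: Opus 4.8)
The plan is to deduce the stated relation directly from Vieta's formulas applied to the kernel, viewed as a quadratic in $y$, combined with the fact that on the slit $[0,x_{2}]$ the two $y$-roots form a complex-conjugate pair.

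First I would expand $H(x,y)=0$ from (\ref{ker}) and (\ref{ui}) as a quadratic $a(x)y^{2}+b(x)y+c(x)=0$ in $y$, reading off the extreme coefficients $a(x)=-D(x)\nu_{1}p$ (from the $-\nu_{1}py^{2}$ term of $R$) and $c(x)=-D(x)\nu_{2}(1-p)x$ (the $y$-independent term of $R$); the $-\tau\gamma xy$ correction only affects the middle coefficient $b(x)$. Since $0<p<1$ and $D(x)=\lambda_{1}(1-x)+\tau>0$ on $[0,1]$, the leading coefficient never vanishes, so by Vieta the product of the two roots is $Y_{0}(x)Y_{1}(x)=c(x)/a(x)=\frac{(1-p)\nu_{2}}{p\nu_{1}}x$, independently of $b(x)$.

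Next I would invoke the branch-point description of Lemma~\ref{lem2}: $x_{1}=0$ and $x_{2}$ are two consecutive real, positive branch points of $Y(x)$, so the discriminant $b(x)^{2}-4a(x)c(x)$ vanishes at both endpoints and keeps a constant sign on the open slit $(0,x_{2})$. Establishing that this sign is \emph{negative} — so that $Y_{0}(x)$ and $Y_{1}(x)$ are genuinely complex conjugates there, $Y_{1}(x)=\overline{Y_{0}(x)}$, rather than real and distinct — is the crux of the argument and would be extracted from the sign analysis carried out in Appendix~\ref{app1}. Granting this, for $x\in(0,x_{2})$ we obtain $|Y_{0}(x)|^{2}=Y_{0}(x)\overline{Y_{0}(x)}=Y_{0}(x)Y_{1}(x)=\frac{(1-p)\nu_{2}}{p\nu_{1}}x$, which is the stated identity; the inequality $|y|^{2}\le\frac{(1-p)\nu_{2}}{p\nu_{1}}x_{2}$ then follows at once from $0\le x\le x_{2}$ and $\frac{(1-p)\nu_{2}}{p\nu_{1}}>0$, extending to the closed slit by continuity.

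Finally, for the geometric assertions: because $a(x),b(x),c(x)$ are real for $x\in[0,x_{2}]$, the conjugate symmetry $Y_{1}(x)=\overline{Y_{0}(x)}$ forces $\mathcal{L}$ to be symmetric about the real axis. As $x$ runs from $0$ to $x_{2}$ along the upper edge of the slit, $Y_{0}(x)$ traces an arc in one half-plane, and returning from $x_{2}$ to $0$ along the lower edge it traces the conjugate arc; the two arcs join at the real points $Y_{0}(0)$ and $Y_{0}(x_{2})$, where the branches coalesce, yielding the smooth closed contour $\mathcal{L}$. The main obstacle, as noted, is pinning down the sign of the discriminant on $(0,x_{2})$ together with checking that $Y_{0}$ stays on a single smooth branch as $x$ sweeps the slit; everything else is bookkeeping via Vieta and continuity.
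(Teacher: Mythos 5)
Your proposal is correct and follows essentially the same route as the paper: the paper's one-line proof invokes the negativity of the discriminant $\Delta(x)$ on $(0,x_{2})$ (deferred to Appendix~\ref{app1}), with the Vieta product $Y_{0}(x)Y_{1}(x)=\frac{(1-p)\nu_{2}}{p\nu_{1}}x$ and the conjugate-pair identity $|Y_{0}(x)|^{2}=Y_{0}(x)\overline{Y_{0}(x)}$ left implicit, and you have simply made those computations explicit. Your identification of the discriminant's sign as the crux, handled by the appendix's analysis of $\Delta(x)=x(f(x)+g(x))$, matches the paper exactly.
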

\begin{proof}
Follows directly from the fact that $\Delta(x)$ is negative for
$x\in(0,x_{2})$ (see Appendix \ref{app1}).
\end{proof}
\subsection{A boundary value problem for $\Pi_{0}(0,y)$}
For $x\in\mathcal{C}_{x}$, $y=Y_{0}(x)$ we obtain
\begin{equation}
(1-p)\Pi_{0}(x,0)-p\Pi_{0}(0,Y_{0}(x))+\frac{C(x,Y_{0}(x))}{F(x,Y_{0}(x))}\Pi_{0}(0,0)=0,
\label{fwq}
\end{equation}
where $F(x,y)=\frac{A(x,y)}{1-p}=-\frac{B(x,y)}{p}$. For $y\in\mathcal{C}_{y}$, $x=X_{0}(y)$
\begin{equation}
(1-p)\Pi_{0}(X_{0}(y),0)-p\Pi_{0}(0,y)+\frac{C(X_{0}(y),y)}{F(X_{0}(y),y)}\Pi_{0}(0,0)=0.
\label{fw}
\end{equation}
Equation (\ref{fw}) implies that
\begin{displaymath}
\Pi(0,y)=\frac{1-p}{p}\Pi_{0}(X_{0}(y),0)-\frac{C(X_{0}(y),y)}{F(X_{0}(y),y)}\Pi_{0}(0,0),
\end{displaymath}
which is a meromorphic function\footnote{Its possible poles are the zeros of $F(X_{0}(y),y)$ in $\mathcal{L}^{+}\cap\mathcal{C}_{y}^{-}$.}. For $y\in \mathcal{L}^{+}\cap\mathcal{C}_{y}^{-}$, $|X_{0}(y)|<1$, and thus, we can construct analytic continuation of $\Pi_{0}(0,y)$ for all $y\in\zeta_{y}:=\mathcal{L}^{+}\cap\mathcal{C}_{y}^{-}$\footnote{Note that $\zeta_{y}$ is an empty set when $(1-p)\nu_{2}\leq p\nu_{1}$, since in such a case $\mathcal{L}$ lies entirely inside the unit circle.}.

Moreover, as $Y_{0}(x)$ is analytic in $\mathcal{D}_{x}-[0,x_{2}]$ \cite{fay}, $\Pi_{0}(0,Y_{0}(x))$ is meromorphic in $\mathcal{D}_{x}-[0,x_{2}]$, and from (\ref{fwq})
\begin{equation}
\Pi_{0}(x,0)=\frac{p}{1-p}\Pi_{0}(0,Y_{0}(x))+\frac{C(x,Y_{0}(x))}{F(x,Y_{0}(x))}\Pi_{0}(0,0)=0,\,x\in\mathcal{D}_{x}-[0,x_{2}].
\end{equation}
We therefore have the relation (\ref{fwq}), not only for $x\in\mathcal{C}_{x}$ but also for $x\in\mathcal{D}_{x}-[0,x_{2}]$, and by continuity, for $x\in\mathcal{D}_{x}-[0,x_{2}]$ too. Since $\Pi_{0}(x,0)$ is real for $x\in[0,x_{2}]$, we obtain
\begin{displaymath}
Re[i\Pi(0,Y_{0}(x))]=Im[\Pi_{0}(0,0)\frac{C(x,Y_{0}(x))}{F(x,Y_{0}(x))}],\,x\in[0,x_{2}],
\end{displaymath}
or equivalently,
\begin{equation}
Re[i\Pi(0,y)]=c(y):=Im[\Pi_{0}(0,0)\frac{C(|y|^2p\nu_{1}/(1-p)\nu_{2},y)}{F(|y|^2p\nu_{1}/(1-p)\nu_{2},y)}],\,y\in\mathcal{L}.
\label{bv}
\end{equation}

Thus our problem is reduced to the determination of a function, which is regular for $y\in\mathcal{L}^{+}$, continuous in $\mathcal{L}^{+}\cup\mathcal{L}$ satisfying the boundary condition (\ref{bv}). A standard way to solve this Riemann-Hilbert boundary value problem is to conformally transformed it on the unit circle \cite{fay,van} by introducing the conformal mappings $z=\gamma(y):\mathcal{L}^{+}\to \mathcal{C}_{y}^{+}$, and its inverse $y=\gamma_{0}(z):\mathcal{C}_{y}^{+}\to\mathcal{L}^{+}$\footnote{See Appendix \ref{app2} for details on the numerical derivation of the conformal mappings}. 

Then, we have the following problem: Find a function $T(z)=\Pi_{0}(0,\gamma_{0}(z))$ regular for $z\in \mathcal{C}_{z}^{+}$, and continuous for $z\in\mathcal{C}_{z}\cup \mathcal{C}_{z}^{+}$ such that, $Re(iT(z))=c(\gamma_{0}(z))$, $z\in\mathcal{C}$. Its solution (see \cite{ga}) is given by
\begin{equation}
\Pi_{0}(0,y)=-\frac{1}{2\pi}\int_{\mathcal{C}_{z}}c(\gamma_{0}(z))\frac{z+\gamma(y)}{z-\gamma(y)}\frac{dz}{z}+K,\,y\in\mathcal{C}_{y}\cup\mathcal{C}_{y}^{+},
\label{sol}
\end{equation}
where $K$ is a constant. Then, (\ref{fwq}) can be used to obtain $\Pi_{0}(x,0)$, $x\in\mathcal{C}_{x}$ \footnote{Note that a similar analysis can be also performed to obtain $\Pi_{0}(x,0)$ in terms of another Riemann-Hilbert boundary value problem}. For $x\in\mathcal{C}_{x}^{+}$, $\Pi_{0}(x,0)$ can be derived by using the Cauchy's formula, yielding
\begin{displaymath}
\Pi_{0}(x,0)=\frac{1}{2\pi}\int_{C_{y}}\frac{V(y)}{y-x}dy,\,x\in\mathcal{C}_{x}^{+},
\end{displaymath}
where 
\begin{displaymath}
V(y)=\frac{1-p}{p}\Pi_{0}(X_{0}(y),0)-\frac{C(X_{0}(y),y)}{F(X_{0}(y),y)}\Pi_{0}(0,0),\,y\in\mathcal{C}_{y}.
\end{displaymath}

Using (\ref{f2}) we obtain $\Pi_{0}(x,y)$. Then, using the second in (\ref{f1}) we obtain $\Pi_{1}(x,y)$ and all unknowns are fully specified. 
\section{Numerical Results}\label{sec:num}
We now compare the PSA to the exact results derived by the BVP approach and investigate the influence of some parameters on the mean queue lengths. 

Figure \ref{fig1} depicts the approximations (\ref{pert}) as a function of $p$ for increasing values of $M$. Set $\lambda_{0}=1$, $\lambda_{1}=0.5$, $\tau=4$, $\gamma=2$, $\nu_{1}=4$, $\nu_{2}=5$. The horizontal lines ($M = 0$) equal the
values for the tandem system with priority for the second queue. Figure \ref{fig1} confirms that
the PSA approximations are accurate for $p$ close to 0, and clearly, more
terms provide larger regions for $p$ where the accuracy is good. 
\begin{figure}[ht!]
\centering
\includegraphics[scale=0.6]{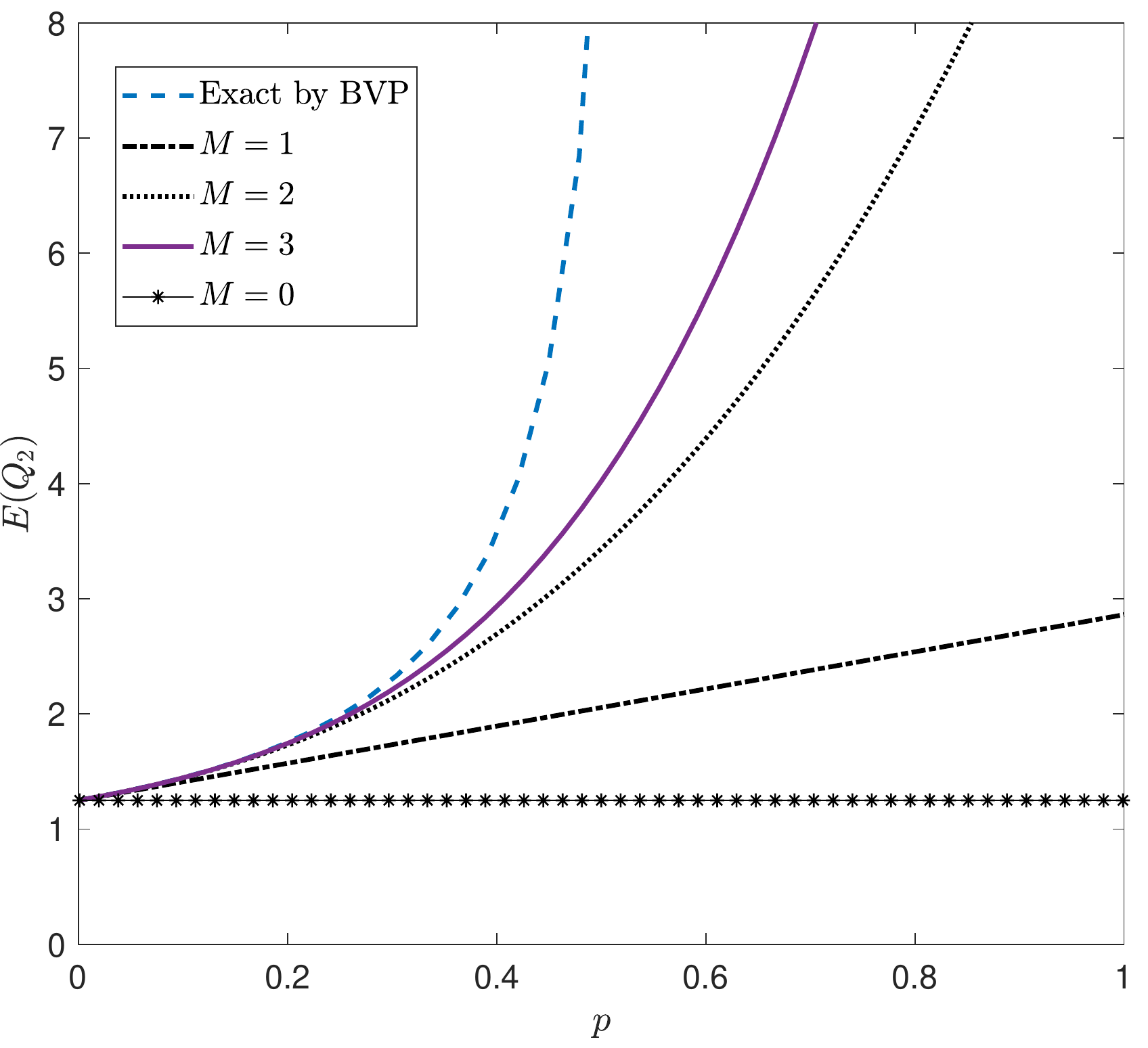}
\caption{Truncation approximations for $\lambda_{0}=1$, $\lambda_{1}=0.5$, $\tau=4$, $\gamma=2$, $\nu_{1}=4$, $\nu_{2}=5$}\label{fig1}
\end{figure}

In Figure \ref{fig11} ($\lambda_{0}=1$, $\lambda_{1}=0.5$, $M=3$, $\nu_{1}=4$, $\nu_{2}=5$), we can observe that the increase in $\gamma$ will definitely increase $E(Q_{2})$, since the system switches to the setup mode more frequently.
\begin{figure}[ht!]
\centering
\includegraphics[scale=0.6]{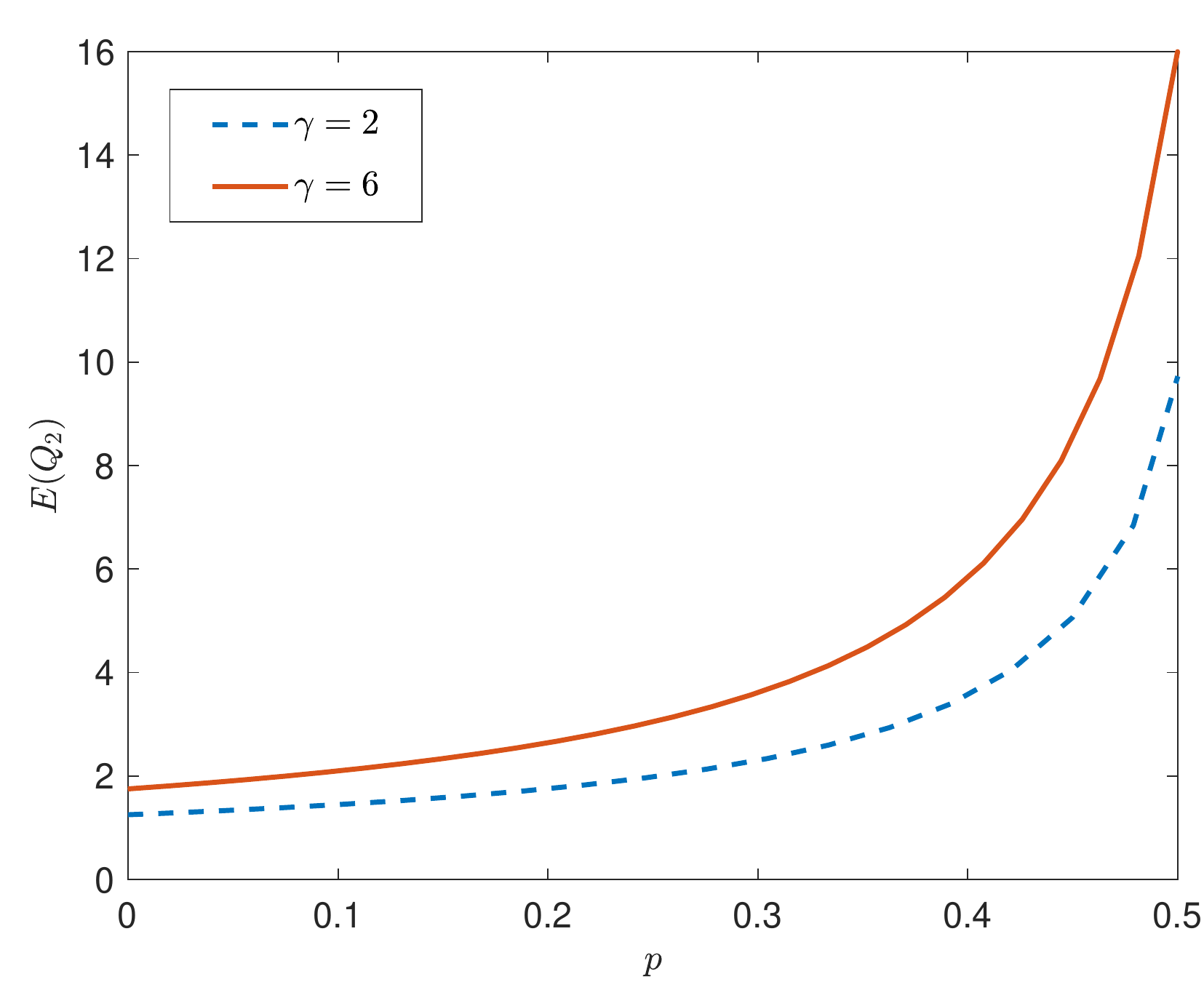}
\caption{Truncation approximations for $\lambda_{0}=1$, $\lambda_{1}=0.5$, $\tau=4$, $M=3$, $\nu_{1}=4$, $\nu_{2}=5$}\label{fig11}
\end{figure}
\section{Conclusion}
In this work, we investigated the stationary behaviour of an unreliable two-node tandem queue with coupled processors, which is described by a markov modulated RWQP. Based on the generating function approach, we applied the PSA method and obtained power series expansions of the pgfs of the stationary joint queue length distributions for each state of the network. With this result we shown the flexibility of the PSA approach to be applied in more complicated models. Moreover, we also obtained the corresponding pgfs with the aid of the theory of Riemann-Hilbert boundary value problems. By truncating the power series, we find good approximations for the expected number of customers especially when $p$ is close to 0, by comparing them with the ``exact" derivations through BVP.

In the future, we plan to expand our results to networks with more than two nodes, where the theory of BVPs cannot be applied, as well as to consider general routing among the nodes. Moreover, it could be also interesting to compare PSA method with other approximation techniques developed so far \cite{mitr,cha,rei,vin}. Moreover, it would be interesting to derive asymptotic estimates for the occurrence of large queue lengths due to the presence of failures.
\appendix
\section{Analysis of the kernel}\label{app1}
\textbf{Proof of Lemma \ref{lem}}
Let $u(x,y)=\lambda_{0}(1-x)+\nu_{1}p(1-\frac{y}{x})+\nu_{2}(1-p)(1-\frac{1}{y})+\gamma$. Note that
\begin{displaymath}
H(x,y)=0\Leftrightarrow xy\{D(x)u(x,y)-\gamma\tau\}=0.
\end{displaymath}
Using the principal value argument it is seen that the number of zeros of
$H(x,y)$ in $\mathcal{D}_{x}$ equals the number of zeros
of $D(x)u(x,y)$ in $\mathcal{D}_{x}$, which is equal to one.\hfill$\square$\vspace{2mm}\\
\textbf{Proof of Lemma \ref{lem2}} The branch points of the two-valued function $Y(x)$ are the zeros of the discriminant $\Delta(x)=x(f(x)+g(x))$ of $H(x,y)=0$, where
\begin{displaymath}
\begin{array}{rl}
f(x)=&D^{2}(x)[x(\lambda_{0}(1-x)+\gamma+p\nu_{1}+(1-p)\nu_{2})^2-4(p\nu_{1}+(1-p)\nu_{2})],\\
g(x)=&x\lambda_{1}(1-x)\gamma[\lambda_{1}(1-x)\gamma+2D(x)(\lambda_{0}(1-x)+p\nu_{1}+(1-p)\nu_{2})].
\end{array}
\end{displaymath}
Note $g(x)=0$ if and only if $x=0$, $x=1$, and
\begin{equation}
\begin{array}{r}
2\lambda_{0}\lambda_{1}x^{2}-x[2\lambda_{0}(2\lambda_{1}+\tau)+\lambda_{1}(\gamma+2(p\nu_{1}+(1-p)\nu_{2}))]\\+2(\tau+\lambda_{1})(\lambda_{0}+p\nu_{1}+(1-p)\nu_{2})+\lambda_{1}\gamma=0.
\end{array}\label{br}
\end{equation}
Let $x_{1}^{*}$, $x_{2}^{*}$ the zeros of (\ref{br}). Then,
\begin{displaymath}
\begin{array}{rl}
x_{1}^{*}x_{2}^{*}=&1+\frac{2(\tau+\lambda_{1})(p\nu_{1}+(1-p)\nu_{2})+\lambda_{1}\gamma+2\lambda_{0}\tau}{2\lambda_{0}\lambda_1}>1,\\
x_{1}^{*}+x_{2}^{*}=&2+\frac{\lambda_{1}\gamma+2\lambda_{1}(p\nu_{1}+(1-p)\nu_{2})+2\lambda_{0}\tau}{2\lambda_{0}\lambda_{1}}>2,
\end{array}
\end{displaymath}
which means that $x_{1}^{*},x_{2}^{*}>1$. Thus $g(x)=0$ has exactly two roots in $[0,1]$. By using Rouche's theorem, we can show that $D(x)=0$, i.e., $x(f(x)+g(x))=0$ has exactly two zeros in $[0,1]$, and one of them equals $x_{1}=0$.

\section{On the derivation of conformal mappings}\label{app2}
A detailed approach on how we can numerically obtain the conformal mappings is given in \cite{bv,van}. We summarized the basic steps. First, we need to represent $\mathcal{L}$ in polar coordinates, i.e., $\mathcal{L}=\{y:y=\rho(\phi)\exp(i\phi),\phi\in[0,2\pi]\}.$ Since $0\in\mathcal{L}^{+}$, for each $y\in\mathcal{L}$, we can have a relation between its absolute value and its real part, i.e., $|y|^{2}=m(Re(y))$. Given the angle $\phi$ of some point on $\mathcal{L}$, the real part of this point, say $\delta(\phi)$, is the solution of $\delta-\cos(\phi)\sqrt{m(\delta)}$, $\phi\in[0,2\pi].$ Since $\mathcal{L}$ is a smooth, egg-shaped contour, the solution is unique. Clearly, $\rho(\phi)=\frac{\delta(\phi)}{\cos(\phi)}$, and the parametrization of $\mathcal{L}$ is fully specified. Then, the mapping from $z\in \mathcal{C}_{z}^{+}$ to $y\in \mathcal{L}^{+}$, where $z = e^{i\phi}$ and $y= \rho(\psi(\phi))e^{i\psi(\phi)}$, satisfying $\gamma_{0}(0)=0$, $\gamma_{0}(z)=\overline{\gamma_{0}(z)}$ is uniquely determined by,
\begin{equation}
\begin{array}{rl}
\gamma_{0}(z)=&z\exp[\frac{1}{2\pi}\int_{0}^{2\pi}\log\{\rho(\psi(\omega))\}\frac{e^{i\omega}+z}{e^{i\omega}-z}d\omega],\,|z|<1,\\
\psi(\phi)=&\phi-\int_{0}^{2\pi}\log\{\rho(\psi(\omega))\}\cot(\frac{\omega-\phi}{2})d\omega,\,0\leq\phi\leq 2\pi,
\end{array}
\label{zx}
\end{equation}
i.e., $\psi(.)$ is uniquely determined as the solution of a Theodorsen integral equation with $\psi(\phi)=2\pi-\psi(2\pi-\phi)$. Due to the correspondence-boundaries theorem, $\gamma_{0}(z)$ is continuous in $\mathcal{C}_{z}\cup \mathcal{C}_{z}^{+}$. 
\section{On the analyticity of $\Pi_{j}(x,y)$ close to $p=0$}\label{appe3}
We focus only on the analyticity of $\Pi_{0}(x,y)$ in a neighborhood of $p=0$ by using a variant of the implicit function theorem on the functional equation (\ref{f2}). The analyticity of $\Pi_{1}(x,y)$, follows directly by the analyticity of $\Pi_{0}(x,y)$ from the second in (\ref{f1}). We follow the lines in \cite{walr,dim}, and use the implicit function theorem for Banach spaces (see Theorem 10.2.3, p. 272 in \cite{ban}). Define the mapping $f:S\subset\mathbb{C}\times B_{2}\to B_{3}\times\mathbb{C}$,
\begin{displaymath}
\begin{array}{l}
f(p,\Pi_{0})=[\Pi_{0}(x,y)H(x,y)-D(x)\{A(x,y)\Pi_{0}(x,0)\\+B(x,y)\Pi_{0}(0,y)+C(x,y)\Pi_{0}(0,0)\}, \Pi_{0}(1,1)-\frac{\tau}{\tau+\gamma}],
\end{array}
\end{displaymath}
where $S$ contains the point $(0,V_{0}^{(0)})$, $H$, $A$, $B$, $C$, are as in (\ref{ker}) and (\ref{ui}) respectively, $B_2$ be the Banach space comprising all bivariate analytic bounded
functions in $\mathbb{D}^{2}$, with $\mathbb{D}$ the open complex unit disk, and $B_3$ be the Banach
space comprising all trivariate analytic bounded functions in $\mathbb{D}^{3}$ that have a
limit of 0 for the first two arguments going to 1.

Since $H$, $A$, $B$, $C$ are bounded analytic functions in $\mathbb{D}^{3}$, and since $f$ is affine in $\Pi_{0}$ and $p$, it is easily seen that $f$ is $r$-times continuously differentiable for all $r$. Note also that $f (0,V_{0}^{(0)}) = [0, 0]$.
Then, the (Banach space) derivative of $f$ at the point $(0,V_{0}^{(0)})$ \cite{ban} equals
\begin{displaymath}
\begin{array}{r}
df(0,V_{0}^{(0)})=[\Pi_{0}(x,y)H(x,y)-D(x)\{A(x,y)\Pi_{0}(x,0)\\+C(x,y)\Pi_{0}(0,0)\}, \Pi_{0}(1,1)].
\end{array}
\end{displaymath}
We need to show that this mapping is a homeomorphism. Indeed,
\begin{enumerate}
\item $df (0,V_{0}^{(0)})$ is a continuous mapping for the same reasons
that the mapping $f$ itself is continuous.
\item For given $\Pi_{0}^{(1)}$, $\Pi^{(2)}_{0}$, let $df (0,V_{0}^{(0)})(\Pi_{0}^{(1)})=df (0,V_{0}^{(0)})(\Pi_{0}^{(1)})$. Then,
\begin{displaymath}
\begin{array}{c}
[\Pi_{0}^{(1)}(x,y)-\Pi_{0}^{(2)}(x,y)]H(x,y)\\-D(x)\{A(x,y)(\Pi_{0}^{(1)}(x,0)-\Pi_{0}^{(2)}(x,0))+C(x,y)(\Pi_{0}^{(1)}(0,0)-\Pi_{0}^{(2)}(0,0))\}=0,\\
\Pi_{0}^{(1)}(1,1)-\Pi_{0}^{(2)}(1,1)=0.
\end{array}
\end{displaymath}
or equivalently $f(0,\Pi_{0}^{(1)}-\Pi_{0}^{(2)})=(0,-\frac{\tau}{\tau+\gamma})$, which in turn has the zero solution as a unique solution \cite{asm}, and thus $\Pi^{(0)}_{1}=\Pi^{(0)}_{2}$ so that $df (0,V_{0}^{(0)})$ is injective.
\item To show that $df (0,V_{0}^{(0)})$ is surjective, we solve the $df (0,V_{0}^{(0)})(\Pi_{0}) = (g,c)$ with $g$ a bivariate analytic bounded function in $\mathbb{D}^{2}$ with limit 0 for its arguments going to 1, and $c$ a complex number. The solution is
\begin{displaymath}
\begin{array}{l}
\Pi_{0}(x,y)\\=\frac{g(x,y)A(x,Y_{0}(x))-g(x,Y_{0}(x))A(x,y)+\Pi^{(0)}(0,0)[C(x,y)A(x,Y_{0}(x))-C(x,Y_{0}(x))A(x,y)]}{H(x,y)A(x,Y_{0}(x))}.
\end{array}
\end{displaymath}
\item The $\Pi_{0}$ obtained previously equals $(df (0,V_{0}^{(0)}))^{-1}$, which is readily seen that it is continuous.
\end{enumerate}
Thus, $\Pi_{0}\to df(0,V_{0}^{(0)})(\Pi^{(0)})$ is a linear homeomorphism and using Theorem 10.2.3 in \cite{ban}, $\Pi_{0}(x,y)$ is $r$-times differentiable at $p=0$. Having this result, and using the second in (\ref{f1}), $\Pi_{1}(x,y)$ is also $r$-times differentiable at $p=0$.
%
%
\bibliographystyle{splncs04}
\bibliography{paper.bbl}

\end{document}